\newtheorem{theorem}{Theorem}
\newtheorem{lemma}{Lemma}
\newtheorem{corollary}{Corollary}
\theoremstyle{remark}
\newtheorem{remark}{Remark}
\newcommand{\R}{\mathbb{R}} 
\newcommand{\Sf}{\mathbb{S}}
\newcommand{\Ha}{\mathcal{H}}
\newcommand{\eps}{\varepsilon}
\newcommand{\la}{\langle}
\newcommand{\ra}{\rangle}
\newcommand{\unit}{\omega}
\mathchardef\emptyset="001F
\title[]{Robustness of the Gaussian concentration inequality\\
and the Brunn-Minkowski inequality}
\author[M. Barchiesi and V. Julin]
{M. Barchiesi and V. Julin}
\address[M. Barchiesi]{Universit\`{a} di Napoli ``Federico II'',
Dipartimento di Matematica e Applicazioni,
Via Cintia, Monte Sant'Angelo, I-80126 Napoli, Italy}
\email{barchies@gmail.com}
\address[V. Julin]{University of Jyv\"{a}skyl\"{a},
Department of Mathematics and Statistics,
P.O.Box 35 (MaD) FI-40014, Finland}
\email{vesa.julin@jyu.fi}
\thanks{This work has been supported by the  FiDiPro project ''Quantitative Isoperimetric Inequalities'' 
and the Academy of Finland grant 268393.}
\date{April 2, 2017}
\begin{document}
\maketitle

%%%%%%%%%%%%%%%%%%%%%%%%%%%%%%%%%%%%%%%%%%%%%%%%%%%%%%%%%%%%%%%%%%%%%%%%%%%%%%%%%%%%%%%%%%%%%%
\begin{center}
\begin{minipage}{12.3cm}
\small{
\noindent {\bf Abstract.} 
We provide a sharp quantitative version of the Gaussian concentration inequality: 
for every $r>0$, the difference between the measure of the $r$-enlargement of a given set
and the $r$-enlargement of a half-space controls the square of the measure of the symmetric 
difference between the set and a suitable half-space. 
We also prove a similar estimate in the Euclidean setting for the enlargement with a general convex set.
This is equivalent to the stability of the Brunn-Minkowski
inequality for the Minkowski sum between a convex set and a generic one.

\medskip
\noindent {\bf 2010 Mathematics Subject Class.} 
49Q20, %Variational problems in a geometric measure-theoretic setting
52A40, %Inequalities and extremum problems
60E15. %Inequalities; stochastic orderings
}
\end{minipage}
\end{center}

\bigskip

\section{introduction}

\noindent
In recent years there has been an increasing interest in the stability of concentration type inequalities 
(see \cite{Ch, CM16, EK,  FJ, FJ2, FMM, FigMP09}). In this paper we establish sharp stability estimates 
for the Gaussian concentration inequality and the Brunn-Minkowski inequality.

The Gaussian concentration inequality is one of the most important examples of concentration of measure phenomenon,
and a basic inequality in probability.  It states that the measure of the $r$-enlargement of a set $E$ is 
larger than the measure of the $r$-enlargement of a half-space $H$ having the same volume than $E$. 
Moreover, the measures are the same only if $E$ itself is a half-space. 
We recall that the $r$-enlargement of a given set $E$ is the Minkowski 
sum between the set and the ball of radius~$r$.  We refer to \cite{Le,Le2} for an introduction to the subject. 

A natural question is the stability of the Gaussian concentration inequality: 
can we control the distance between $E$ and $H$ with the gap of the Gaussian concentration 
inequality (the difference of the measures of the enlargements of $E$ and $H$)?   
We measure the distance between $E$ and $H$ by the Fraenkel asymmetry which is the measure of their symmetric difference. 
We prove that the gap of the Gaussian concentration inequality controls the square of the Fraenkel asymmetry.
This extends the stability of the Gaussian isoperimetric inequality \cite{BBJ}.
Our proof is based on the simple observation that the $r$-enlargement of a half-space $H$ can never completely 
cover the $r$-enlargement of the set $E$  (see Lemma \ref{lemma gauss}). This fact, which is essentially due to 
the convexity of the half-space, enables us to directly relate the problem to 
the stability of the Gaussian isoperimetric inequality. 

Our approach can be also adapted to the Euclidean setting for the enlargement with a given 
convex set $K$.  The Euclidean concentration inequality can be written as the Brunn-Minkowski inequality 
in the case of the Minkowski sum between a convex set and a generic one. Our interest in refining the Brunn-Minkowski 
inequality is motivated by the fact that it is one of the most fundamental inequalities  in analysis. 
We refer to the beautiful monograph \cite{G} for a survey on the subject. 
As for the Gaussian concentration inequality, also in the Euclidian case  we prove that the concentration gap controls 
the square of the asymmetry. 
As a corollary we obtain the sharp quantitative Brunn-Minkowski inequality when one of the set is convex. 

In order to state our result on the Gaussian concentration  more precisely, we introduce some notation. 
Throughout the paper we assume $n\geq2$.
Given a measurable set $E\subset\R^n$, its Gaussian measure is defined as
\begin{equation*}
\gamma(E):=\frac{1}{(2\pi)^{\frac{n}{2}}}\int_E e^{-\frac{|x|^2}{2}}dx.
\end{equation*}
Moreover, given $\unit \in\Sf^{n-1}$ and $s\in\R$,  $H_{\unit,s}$ denotes the half-space 
\begin{equation*}
H_{\unit,s}:=\{x\in\R^n \text{ : } \langle x,\unit\rangle<s\},
\end{equation*}
while $B_r$ denotes the open ball of radius $r$ centered at the origin.
We define also the function $\phi:\R\rightarrow(0,1)$ as the Gaussian measure of $H_{\unit,s}$, i.e.,
\begin{equation*}
\phi(s):=\frac{1}{\sqrt{2\pi}}\int_{-\infty}^s e^{-\frac{t^2}{2}}dt.
\end{equation*}
The concentration inequality states that, given a set $E$ with mass $\gamma(E)=\phi(s)$, 
for any $r>0$ one has
\begin{equation}\label{concentration ine}
\gamma(E+B_r)\geq \phi(s+r),
\end{equation}

\vspace{3pt}
\noindent
and the equality holds if and only if $E=H_{\unit,s}$ for some $\unit \in\Sf^{n-1}$. 
We have used the notation
\begin{equation*}
E+ B_r = \{ x+y \colon x \in E, \,\, y \in B_r\}
\end{equation*} 
for the $r$-enlargement of the set $E$.  In other words  $E+ B_r$ is the set of all  points
 which distance to  $E$ is less than $r$.
In order to study the stability of  inequality \eqref{concentration ine} we introduce the \emph{Fraenkel asymmetry}, 
which measures how far a given set  is   from a half-space. 
Given a measurable set $E$ with $\gamma(E)=\phi(s)$ we define
\begin{equation*}
\alpha_\gamma(E):=\min_{\unit\in\Sf^{n-1}}\gamma(E\triangle H_{\unit,s}),
\end{equation*}
where $\triangle$ stands for the symmetric difference between sets.

Here is our result for the stability of the Gaussian concentration.
\begin{theorem}\label{main theorem}
There exists an absolute constant $c>0$ such that for every $s\in\R$, $r>0$, and for every set $E\subset\R^n$ 
with $\gamma(E)=\phi(s)$ the following estimate holds:
\begin{equation}\label{main estimate}
\gamma(E+B_r)-\phi(s+r)\geq c\, \left(e^{s^2}e^{-\frac{(|s| +r+ 4)^2}{2}}\right)\,r\,\alpha_\gamma^2(E).
\end{equation}
\end{theorem}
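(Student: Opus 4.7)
The plan is to reduce the quantitative concentration inequality \eqref{main estimate} to the quantitative Gaussian isoperimetric inequality proved in \cite{BBJ}, which for a set $F$ with $\gamma(F)=\phi(u)$ provides a lower bound of the form $P_\gamma(F)-\phi'(u)\geq c(u)\,\alpha_\gamma^2(F)$ on the perimeter deficit; here $P_\gamma$ denotes the Gaussian perimeter and $c$ is an explicit positive function. The mechanism linking the two inequalities is Lemma~\ref{lemma gauss}: although $H_{\unit,s}+B_r=H_{\unit,s+r}$, the enlargement $E+B_r$ cannot be entirely contained in $H_{\unit,s+r}$ whenever $E$ differs from a half-space, and this failure is quantitative in terms of $\alpha_\gamma(E)$.

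The natural way to exploit this is to parametrize the enlargement by setting $\psi(t):=\phi^{-1}(\gamma(E+B_t))$, so that $\psi(0)=s$ and, by \eqref{concentration ine}, $\psi(t)\geq s+t$. Since the boundary of $E+B_t$ is Lipschitz for $t>0$, the Minkowski-content identity
\[
\phi'(\psi(t))\,\psi'(t)=\frac{d}{dt}\gamma(E+B_t)=P_\gamma(E+B_t)
\]
combines with the isoperimetric stability of \cite{BBJ} applied to $E+B_t$ to give
\[
\psi'(t)-1\geq\frac{c(\psi(t))\,\alpha_\gamma^2(E+B_t)}{\phi'(\psi(t))}.
\]
Integrating this differential inequality on $[0,r]$ yields a lower bound on $\psi(r)-(s+r)$, which by monotonicity of $\phi$ translates into a lower bound for $\gamma(E+B_r)-\phi(s+r)=\phi(\psi(r))-\phi(s+r)$.

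The central difficulty is replacing $\alpha_\gamma(E+B_t)$ inside the integrand by the original $\alpha_\gamma(E)$, and this is precisely the role of Lemma~\ref{lemma gauss}: if $H=H_{\unit,s}$ is a half-space realizing the Fraenkel asymmetry of $E$, the non-covering statement provides a quantitative lower bound on the mass of $(E+B_t)\setminus H_{\unit,s+t}$ in terms of $\gamma(E\setminus H)=\alpha_\gamma(E)/2$, from which one extracts an inequality of the form $\alpha_\gamma(E+B_t)\gtrsim g(s,t)\,\alpha_\gamma(E)$ with an appropriate Gaussian-density weight $g$. After this substitution, the integral bound becomes proportional to $r\,\alpha_\gamma^2(E)$ multiplied by various $\phi'$-evaluations along $\psi$.

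To cast the resulting estimate into the explicit form \eqref{main estimate}, I would argue by a case distinction on the size of $\psi(r)$. When $\psi(r)\leq|s|+r+4$, all the $\phi'$-factors appearing in the estimate are uniformly comparable to $e^{-(|s|+r+4)^2/2}$, and combining with the BBJ constant and the factor $1/\phi'(s)^2$ produces the prefactor $e^{s^2}\,e^{-(|s|+r+4)^2/2}$ exactly. When instead $\psi(r)>|s|+r+4$, the concentration gap $\phi(\psi(r))-\phi(s+r)$ exceeds $\phi(|s|+r+4)-\phi(s+r)$, which, in view of the elementary bound $\alpha_\gamma(E)^2\leq 4\min(\gamma(E),1-\gamma(E))^2$, already dominates the right-hand side of \eqref{main estimate}. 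The hardest technical point, upon which the whole argument rests, is the sharp quantitative form of Lemma~\ref{lemma gauss} and the corresponding lower bound on $\alpha_\gamma(E+B_t)$ in terms of $\alpha_\gamma(E)$ with the correct Gaussian-weight dependence.
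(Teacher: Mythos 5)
Your proposal follows the paper's strategy in its essentials: differentiate the enlargement, apply the quantitative Gaussian isoperimetric inequality from \cite{BBJ} to $E+B_t$, and use Lemma~\ref{lemma gauss} to relate the asymmetry of $E+B_t$ back to that of $E$. The parametrization $\psi(t)=\phi^{-1}(\gamma(E+B_t))$ is a cosmetic repackaging of the paper's auxiliary $\hat\rho$ and function $f(\rho)=P_\gamma(E+B_\rho)-e^{-(s+\rho)^2/2}$, so this is not a genuinely different route.

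There is, however, a real gap at the step you yourself flag as the hardest: the claimed implication ``Lemma~\ref{lemma gauss} gives $\alpha_\gamma(E+B_t)\gtrsim g(s,t)\,\alpha_\gamma(E)$.'' This inequality is simply false without an extra hypothesis. Lemma~\ref{lemma gauss} controls $\gamma((E+B_t)\setminus H_{\unit,s+t})$, whereas $\alpha_\gamma(E+B_t)$ is measured against the half-space at the \emph{correct} level $\psi(t)$. Passing from one to the other produces a slab correction
\[
\gamma\bigl(H_{\unit,\psi(t)}\setminus H_{\unit,s+t}\bigr)=\phi(\psi(t))-\phi(s+t)=\gamma(E+B_t)-\phi(s+t),
\]
which is exactly the concentration gap at time $t$ — the quantity one is trying to bound from below, and which has no a priori upper bound in terms of $\alpha_\gamma(E)$. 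So one gets only
\[
\alpha_\gamma(E+B_t)\;\geq\; \tfrac{e^{-s^+}}{5}\,\alpha_\gamma(E)\;-\;2\bigl(\gamma(E+B_t)-\phi(s+t)\bigr),
\]
which is useful only when the gap at time $t$ is already small (of order $\alpha_\gamma^2(E)$ after the Gaussian-weight normalization). This forces the dichotomy that the paper carries out: either the gap stays below a threshold $\sim\eps\,e^{s^2/2}e^{-3|s|}\alpha_\gamma^2(E)$ for every $\rho\in(0,r]$ (Case 1, where the isoperimetric stability argument runs cleanly because the slab is negligible), or at some intermediate time the gap exceeds this threshold (Case 2, where one is essentially done and pushes the gap forward to time $r$ by the concentration inequality). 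Your case distinction on whether $\psi(r)\leq |s|+r+4$ is a different, cruder split; it resembles the paper's preliminary reduction to $\gamma(E+B_r)\leq\phi(s+r+1)$ but does not address the slab problem at intermediate times. Without the dichotomy on the size of the gap along the flow, the differential inequality cannot be integrated as you describe, and the proof does not close.

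Finally, the range $r>1$ needs separate treatment (as in the paper, which runs the argument up to $r=1$ and then propagates the established gap forward by concentration), since the $\phi'$-weights degenerate differently for large $r$ and the uniform comparability you invoke on $[0,r]$ no longer holds.
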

The result is sharp in the sense that  $\alpha_\gamma^2(E)$ cannot be replaced by any other  function of $\alpha_\gamma(E)$ 
converging to zero more slowly.   
Previously in~\cite[Theorem 1.2]{CM16} a similar result was proved with $\alpha_\gamma^4(E)$ on the right-hand side.  
Another important  feature of \eqref{main estimate} is that the dimension of the space does not appear in the inequality. 
Finally we remark that since the left-hand side of \eqref{main estimate} converges to zero 
as $r$ goes  to infinity, so the right-hand side has to do. However, we do not known 
the optimal dependence on $s$ and $r$, or how they are coupled.

Recently, a different asymmetry has been proposed in \cite{EL}:
\begin{equation*}
\beta(E):=\min_{\omega\in\mathbb{S}^{n-1}}\big|b(E)-b(H_{\omega,s})\big|,
\end{equation*}
where $b(E):=\int_E x \, d\gamma$ is the (non-renormalized) barycenter of the set $E$. 
We call $\beta(E)$ \emph{strong asymmetry} since it controls the Fraenkel one (see \cite[Proposition 4]{BBJ}).
It would be interesting to replace in \eqref{main estimate} the Fraenkel asymmetry with this stronger one.

\vspace{8pt}
Moving on the Euclidean setting,  we assume  $K\subset\R^n$ to be an open, bounded, and
convex set which contains the origin. The Euclidean concentration inequality 
states that for a measurable set $E$ with $|E| = |K|$ it holds
\begin{equation} \label{aniso concentration}
|E+ rK| \geq |(1+r)K|
\end{equation}
for every $r >0$. Note that since $K$ is convex, it holds $K + rK = (1+r)K$. This is a special case of 
the  Brunn-Minkowski inequality which states that for given two measurable, bounded and 
non-empty sets $E,F\subset\R^n$ such that also $E+F:=\{x+y \colon x\in E, \,y\in F\}$ is measurable, it holds
\begin{equation} \label{brunn min orig}
|E+F|^{1/n}\geq |E|^{1/n} + |F|^{1/n}.
\end{equation}
The concentration inequality \eqref{aniso concentration} follows from the Brunn-Minkowski inequality by choosing $F = rK$. 
However, when $F$ is convex then \eqref{aniso concentration} is equivalent to \eqref{brunn min orig}.  
We define the Fraenkel asymmetry of a set $E$ with respect to $K$ as the quantity
\begin{equation*} 
\alpha(E) = \inf_{x \in \R^n} |(E +x) \triangle sK)|, \qquad \text{where } \, s=(|E|/|K|)^{1/n}.
\end{equation*}

Here is our result for the stability of the Euclidian concentration.
\begin{theorem}\label{main thm 2}
There exists a dimensional constant $c_n>0$ such that for every $r>0$ and for every set $E\subset\R^n$ 
with $|E| = |K|$ the following estimate holds:
\begin{equation}\label{conce convex}
|E+ rK| - |(1+r)K| \geq c_n \, \max\{r^{n-1},r\}\, \frac{\alpha^2(E)}{|E|}. 
\end{equation}
\end{theorem}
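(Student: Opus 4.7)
My plan is to adapt the strategy used in the Gaussian setting: reduce the stability of the concentration inequality \eqref{aniso concentration} to the sharp quantitative anisotropic isoperimetric inequality of Figalli-Maggi-Pratelli via an integration argument, combined with a Euclidean counterpart of Lemma~\ref{lemma gauss} controlling the Fraenkel asymmetry of the enlargements $E+\rho K$ for $\rho\in(0,r)$. After translating $E$ I may assume the infimum in the definition of $\alpha(E)$ is attained at $x=0$, so that $\alpha(E)=|E\triangle K|$; by a standard approximation I may also assume $E$ is bounded with smooth boundary.

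The starting point is the identity
\[
|E+rK|-|(1+r)K| \;=\; \int_0^r\bigl[\,P_K(E+\rho K)-n(1+\rho)^{n-1}|K|\,\bigr]\,d\rho,
\]
where $P_K(F):=\lim_{\epsilon\to 0^+}\epsilon^{-1}(|F+\epsilon K|-|F|)$ denotes the anisotropic perimeter of $F$. Brunn-Minkowski gives $|E+\rho K|\geq(1+\rho)^n|K|$, so the integrand is bounded below by the anisotropic isoperimetric deficit $D_K(E+\rho K):=P_K(E+\rho K)-n|K|^{1/n}|E+\rho K|^{(n-1)/n}$, and the sharp quantitative anisotropic isoperimetric inequality yields
\[
D_K(E+\rho K)\;\geq\; c_n\,|K|^{1/n}\,\frac{\alpha(E+\rho K)^2}{|E+\rho K|^{(n+1)/n}}.
\]
This reduces the problem to a pointwise-in-$\rho$ lower bound on $\alpha(E+\rho K)$.

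The crucial step, and the main obstacle, is the Euclidean analogue of Lemma~\ref{lemma gauss}: the Minkowski sum with $\rho K$ should not destroy the asymmetry between $E$ and $K$. The basic geometric observation is that although $K+\rho K=(1+\rho)K$ by convexity, one has $x+\rho K\not\subseteq(1+\rho)K$ whenever $x\notin K$, and the portion of $x+\rho K$ sticking out of $(1+\rho)K$ can be bounded from below in a way that scales with $(1+\rho)^{n-1}$ and with $\mathrm{dist}(x,K)$. Integrating this over $x\in E\setminus K$ should translate the asymmetry of $E$ into a quantitative lower bound for $\alpha(E+\rho K)$, in the Euclidean counterpart of the translation $x\mapsto x+r\omega$ used in the Gaussian proof. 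Once such a lower bound is established, combining it with the deficit estimate and integrating over $\rho$, separating the regimes $r\leq 1$ (where the linear $r$ factor comes from the behaviour near $\rho=0$) and $r\geq 1$ (where the $(1+\rho)^{n-1}$ scaling of the boundary of the enlargement dominates), should yield the asserted $\max\{r^{n-1},r\}$ dependence.
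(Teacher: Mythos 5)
Your overall strategy matches the paper's: integrate the anisotropic perimeter deficit in $\rho$, apply the quantitative Wulff inequality of Figalli--Maggi--Pratelli, and control the asymmetry of the enlargements $E+\rho K$ via a Euclidean analogue of Lemma~\ref{lemma gauss}. However, there are two genuine problems in the reduction.

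First, the key lemma you want --- a lower bound on $\alpha(E+\rho K)$, the asymmetry of $E+\rho K$ relative to its \emph{volume-matching} scaling of $K$ --- is in fact false. Take $K=B_1$ and $E=B_{\lambda}\setminus B_\eps$ with $\lambda^n-\eps^n=1$; then for $\rho\geq\eps$ we have $E+\rho K=B_{\lambda+\rho}$, which is itself a ball, so $\alpha(E+\rho K)=0$ while $\alpha(E)=2\omega_n\eps^n>0$. The correct statement (the paper's Lemma~\ref{lemma eucl 1}) controls $|(E+\rho K)\setminus(1+\rho)K|$ from below, not $\alpha(E+\rho K)$: these differ because $|E+\rho K|$ can strictly exceed $|(1+\rho)K|$, so the volume-matching scaling $(1+\hat\rho)K$ can be strictly larger than $(1+\rho)K$ and swallow the asymmetry.

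Second, and relatedly, your bound of the integrand $P_K(E+\rho K)-n(1+\rho)^{n-1}|K|$ from below by the isoperimetric deficit $D_K(E+\rho K)$ throws away exactly this ``volume gap'' $|E+\rho K|-|(1+\rho)K|$, and in the example above this loss is fatal: the $D_K\gtrsim\alpha^2$ route produces $\int_0^1 D_K\,d\rho\gtrsim\eps^{2n+1}$, while the theorem demands $\gtrsim\eps^{2n}$. The paper instead splits $f(\rho)=\bigl[P_K(E+\rho K)-P_K((1+\hat\rho)K)\bigr]+\bigl[P_K((1+\hat\rho)K)-P_K((1+\rho)K)\bigr]$ where $|(1+\hat\rho)K|=|E+\rho K|$, bounds the first term by the Wulff inequality and the second by the volume gap, and then recombines via the elementary inequality $a^2+b^2\geq\tfrac12(a+b)^2$ together with the triangle inequality to arrive at $\inf_x|((E+\rho K)+x)\setminus(1+\rho)K|^2$, which Lemma~\ref{lemma eucl 1} then bounds by $\alpha(E)^2/4$. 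Finally, the proof of Lemma~\ref{lemma eucl 1} is not the pointwise sweep over $x\in E\setminus K$ you sketch (which suffers from overlaps of the translates $x+\rho K$); it is a two-line global argument: apply the concentration inequality to $E\cup K$ and observe that $((E+rK)\setminus(1+r)K)\cup(1+r)K=(E\cup K)+rK$, then use monotonicity of $t\mapsto(t+r_0)^n-(1+t)^n$. I would suggest you look for an argument of this global type rather than a pointwise one.
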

Also in this case the quadratic exponent on $\alpha(E)$ is sharp. This result was recently proved 
in~\cite{FMM} in the case when $K$ is a ball.

\vspace{8pt}
Finally we use Theorem  \ref{main thm 2} to prove a sharp quantitative version of the Brunn-Minkowski 
inequality \eqref{brunn min orig}  when $F$ is convex. Let us define
\begin{equation*}
\alpha(E,F):=\inf_{x\in\R^n}|(E+x)\triangle sF|, \;\text{ where } s=(|E|/|F|)^{1/n}.
\end{equation*}

\begin{corollary} \label{corollary}
Let $E,F\subset\R^n$ be two measurable, bounded and not empty sets. Assume that $F$ is convex.
Then, 
\begin{equation}\label{Brunn-Minkowski}
|E+F|^{1/n}-|E|^{1/n} - |F|^{1/n}\geq c_n\,\min\{|E|,|F|\}^{1/n}\,\frac{\alpha^2(E,F)}{|E|^2}.
\end{equation}
\end{corollary}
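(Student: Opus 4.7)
The plan is to deduce Corollary \ref{corollary} from Theorem \ref{main thm 2} by a standard rescaling that turns the Brunn-Minkowski problem into a concentration problem. I set $s = (|E|/|F|)^{1/n}$, $K := sF$ and $\tilde r := 1/s = (|F|/|E|)^{1/n}$, so that $K$ is bounded and convex with $|K| = |E|$, and $E + F = E + \tilde r K$. After a translation of $F$ (which affects neither $|E+F|$ nor $\alpha(E,F)$), I may assume that $K$ contains the origin, so Theorem \ref{main thm 2} applies to the pair $(E, K)$. The definitions yield $\alpha(E) = \alpha(E, F)$, and using $|(1+\tilde r) K| = (1+\tilde r)^n |E| = (|E|^{1/n} + |F|^{1/n})^n$ one gets
\[
|E + F| \geq (|E|^{1/n} + |F|^{1/n})^n + B, \qquad B := c_n \max\{\tilde r^{n-1}, \tilde r\}\, \frac{\alpha(E, F)^2}{|E|}.
\]

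Next I would pass from this additive improvement at the level of volumes to the Brunn-Minkowski form via concavity of $t \mapsto t^{1/n}$. Writing $m := |E|^{1/n} + |F|^{1/n}$, concavity gives
\[
|E+F|^{1/n} - m \,\geq\, \frac{|E+F| - m^n}{n\, |E+F|^{(n-1)/n}} \,\geq\, \frac{B}{n\, |E+F|^{(n-1)/n}}.
\]
I would split into two cases. If $|E+F|^{1/n} \leq 2 m$, then $|E+F|^{(n-1)/n} \leq 2^{n-1} m^{n-1}$, and after substituting the definition of $B$ the target inequality \eqref{Brunn-Minkowski} reduces to the elementary algebraic bound
\[
\max\{\tilde r^{n-1}, \tilde r\} \cdot |E| \,\geq\, c_n\, m^{n-1} \min\{|E|, |F|\}^{1/n}.
\]
This is verified by distinguishing the subcases $|E| \geq |F|$ (so $\max\{\tilde r^{n-1}, \tilde r\} = \tilde r$ and $\min\{|E|, |F|\} = |F|$, reducing after cancellation to $|E|^{(n-1)/n} \gtrsim m^{n-1}$, which holds because $m \leq 2|E|^{1/n}$) and the symmetric subcase $|E| < |F|$.

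If instead $|E+F|^{1/n} > 2m$, then $|E+F|^{1/n} - m > m \geq \min\{|E|, |F|\}^{1/n}$, and since $\alpha(E, F) \leq |E| + |sF| = 2|E|$ the right-hand side of \eqref{Brunn-Minkowski} is bounded by $4 c_n \min\{|E|, |F|\}^{1/n}$, so shrinking $c_n$ closes this regime trivially. The real content of Corollary \ref{corollary} is therefore entirely packed into Theorem \ref{main thm 2}; the only thing to verify here is that the factor $\max\{\tilde r^{n-1}, \tilde r\}$ appearing in that theorem is exactly what is needed to produce the $\min\{|E|, |F|\}^{1/n}$ on the right of \eqref{Brunn-Minkowski}. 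No serious obstacle is expected beyond keeping track of dimensional constants through the case analysis.
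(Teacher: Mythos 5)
Your proof is correct and follows essentially the same route as the paper: rescale to $K = sF$, apply Theorem~\ref{main thm 2}, linearize $t\mapsto t^{1/n}$ to convert the volume-gap bound into the Brunn--Minkowski form, dispose of the regime where $|E+F|$ is much larger than $(|E|^{1/n}+|F|^{1/n})^n$ trivially via $\alpha(E,F)\leq 2|E|$, and in the remaining regime split on which of $|E|,|F|$ is larger so that $\max\{\tilde r^{n-1},\tilde r\}$ simplifies and the algebra closes. The only cosmetic difference is the order of operations (you invoke the theorem before the concavity step) and the precise threshold used for the trivial case ($|E+F|^{1/n}\leq 2m$ in place of $|E+F|\leq 3^n\max\{|E|,|F|\}$), which are equivalent up to dimensional constants.
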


\vspace{8pt}

This result was proved in \cite[Theorem 1.2]{FigMP} (see also \cite[Theorem 1]{FigMP09}) in the case when both the sets
are  convex. In  \cite[Theorem 1.1]{CM16} the above result was proved with $\alpha^4(E,F)$.
For two general sets the best result to date  has been provided in~\cite{FJ} (see also \cite{FJ2}), 
but it is not known if the  exponent on the asymmetry is optimal. The sharp stability of the Brunn-Minkowski inequality 
for general sets is one of the main open problems in the field. Also the optimal dimensional dependence in inequalities
\eqref{conce convex} and \eqref{Brunn-Minkowski} is not known (see Remark \ref{dimensional dependence}).	

%%%%%%%%%%%%%%%%%%%%%%%%%%%%%%%%%%%%%%%%%%%%%%%%%%%%%%%%%%%%%%%%%%%%%%%%%%%%%%%%%%%%%%%%%%%%%%%%%%%%%%%%%%%%%%%%%%%%%%%%%%%

\section{The Gaussian concentration}

\noindent
In this section we provide a proof of Theorem \ref{main theorem}.
The symbol $c$ will denote a positive absolute constant,
whose value is not specified and may vary from line to line.

Let us recall the definition and some basic results for the Gaussian perimeter. 
For an introduction to sets of finite perimeter we refer to \cite{Ma}. 
If $E$ is a set of locally finite perimeter, its  Gaussian perimeter is defined as
\begin{equation*}
P_\gamma(E) := \frac{1}{(2\pi)^{\frac{n-1}{2}}}\int_{\partial^* E}e^{-\frac{|x|^2}{2}}d\Ha^{n-1}(x),
\end{equation*}
where $\Ha^{n-1}$ is the $(n-1)$-dimensional Hausdorff measure and $\partial^* E$
is the reduced boundary of~$E$. If $E$ is an open set with Lipschitz boundary, then 
\begin{equation} \label{derivative 1}
P_\gamma(E)= \sqrt{2\pi}\,\lim_{r\rightarrow0^+}\frac{\gamma(E+B_r)-\gamma(E)}{r}.
\end{equation}
In particular, from the concentration inequality \eqref{concentration ine} one obtains the Gaussian isoperimetric inequality:
given an open set $E$ with measure $\gamma(E)=\phi(s)$, 
\begin{equation*}
P_\gamma(E)\geq e^{-\frac{s^2}{2}},
\end{equation*}
and the equality holds if and only if $E=H_{\unit,s}$ for some $\unit \in\Sf^{n-1}$ \cite{CK}.
On the other hand, it is not difficult to see  that the  isoperimetric inequality implies the concentration inequality \eqref{concentration ine}. 
Our proof of Theorem  \ref{main theorem} is based on the robust version of the Gaussian isoperimetric inequality:
for every $s \in \R$ and for every set $E\subset\R^n$  of locally finite perimeter with $\gamma(E)=\phi(s)$ it holds
\begin{equation}\label{quanti standard new}
P_\gamma(E)-e^{-\frac{s^2}{2}}\geq c\,\frac{e^{\frac{s^2}{2}}}{1+s^2}\,\alpha_\gamma^2(E).
\end{equation}
This estimate has been recently proved in \cite[Corollary 1]{BBJ} (see also \cite{CFMP,EL,MN,MN2}). 
Note that letting $r \to 0$ in \eqref{main estimate} we obtain \eqref{quanti standard new}  by \eqref{derivative 1}  
(with a slightly worse dependence on $s$).  Therefore since the exponent on the Fraenkel asymmetry in 
\eqref{quanti standard new} is sharp (see \cite{CFMP}), also the exponent in \eqref{main estimate} is sharp. 

First we need a simple lemma, which proof is a modification of \cite[Lemma 2.1]{CM16}. 
\begin{lemma} \label{lemma gauss 0}
Let $r>0$ and let  $E$ be a measurable set. Then 
\begin{equation*}
\gamma(E+B_r)\geq\gamma(E) + \frac{1}{\sqrt{2\pi}}\int_0^r P_\gamma(E+B_\rho)\, d\rho.
\end{equation*}
\end{lemma}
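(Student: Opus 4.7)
The plan is to identify the $r$-enlargement with a sublevel set of the distance function and then apply the coarea formula. Define $d_E(x) := \inf_{y\in E}|x-y|$. Since $d_E$ is $1$-Lipschitz, for every $\rho>0$ we have $E+B_\rho = \{d_E<\rho\}$, while $\{d_E=0\}=\overline E$. Because $E\subseteq \overline E\subseteq E+B_r$, we get
$$\gamma(E+B_r)-\gamma(E)\;\geq\;\gamma(E+B_r)-\gamma(\overline E)\;=\;\gamma(\{0<d_E<r\}).$$

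Next I would apply the coarea formula to the Lipschitz function $u=d_E$ with the non-negative weight $(2\pi)^{-n/2}e^{-|x|^2/2}$. Since $|\nabla d_E|=1$ almost everywhere on $\{d_E>0\}$ (a standard consequence of Rademacher's theorem and the eikonal property of the distance function), this produces the identity
$$\gamma(\{0<d_E<r\})\;=\;\frac{1}{(2\pi)^{n/2}}\int_0^r\int_{\{d_E=\rho\}} e^{-\frac{|x|^2}{2}}\,d\Ha^{n-1}\,d\rho.$$

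The remaining ingredient is an inclusion of boundaries. Since $d_E$ is continuous, the topological boundary $\partial(E+B_\rho)=\partial\{d_E<\rho\}$ lies entirely in the level set $\{d_E=\rho\}$, and the reduced boundary $\partial^*(E+B_\rho)$ satisfies the same containment. Therefore
$$\int_{\{d_E=\rho\}} e^{-\frac{|x|^2}{2}}\,d\Ha^{n-1}\;\geq\;\int_{\partial^*(E+B_\rho)} e^{-\frac{|x|^2}{2}}\,d\Ha^{n-1}\;=\;(2\pi)^{\frac{n-1}{2}}\,P_\gamma(E+B_\rho).$$
Substituting this estimate into the coarea identity and combining with the opening inequality yields exactly $\gamma(E+B_r)-\gamma(E)\geq\frac{1}{\sqrt{2\pi}}\int_0^r P_\gamma(E+B_\rho)\,d\rho$.

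I expect the only real subtlety to be the clean invocation of the coarea formula for the non-smooth Lipschitz function $d_E$ against a smooth weight; this is however a direct application of the standard Lipschitz coarea formula, the only nontrivial input being $|\nabla d_E|=1$ a.e.\ outside $\overline E$. Everything else reduces to continuity of $d_E$ and the trivial inclusion $\partial^*\subseteq\partial$ for sets of finite perimeter, both of which are routine.
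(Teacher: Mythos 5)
Your argument is correct and is the standard coarea-formula proof of this type of statement; the paper itself does not display a proof of Lemma~\ref{lemma gauss 0} (it cites it as a modification of \cite[Lemma~2.1]{CM16}, which runs along exactly these lines in the Euclidean setting). The one small point worth making explicit is that $E+B_\rho$ has locally finite perimeter only for a.e.\ $\rho$ (this follows from the very coarea identity you wrote, since it forces $\int_{\{d_E=\rho\}}e^{-|x|^2/2}\,d\Ha^{n-1}<\infty$ for a.e.\ $\rho$, and then Federer's criterion applies); the reduced boundary $\partial^*(E+B_\rho)$ and the representation of $P_\gamma(E+B_\rho)$ as a surface integral are therefore only available for a.e.\ $\rho$, which is of course enough since one integrates in $\rho$ at the end.
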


\begin{figure}
\centering
\includegraphics[width=10cm]{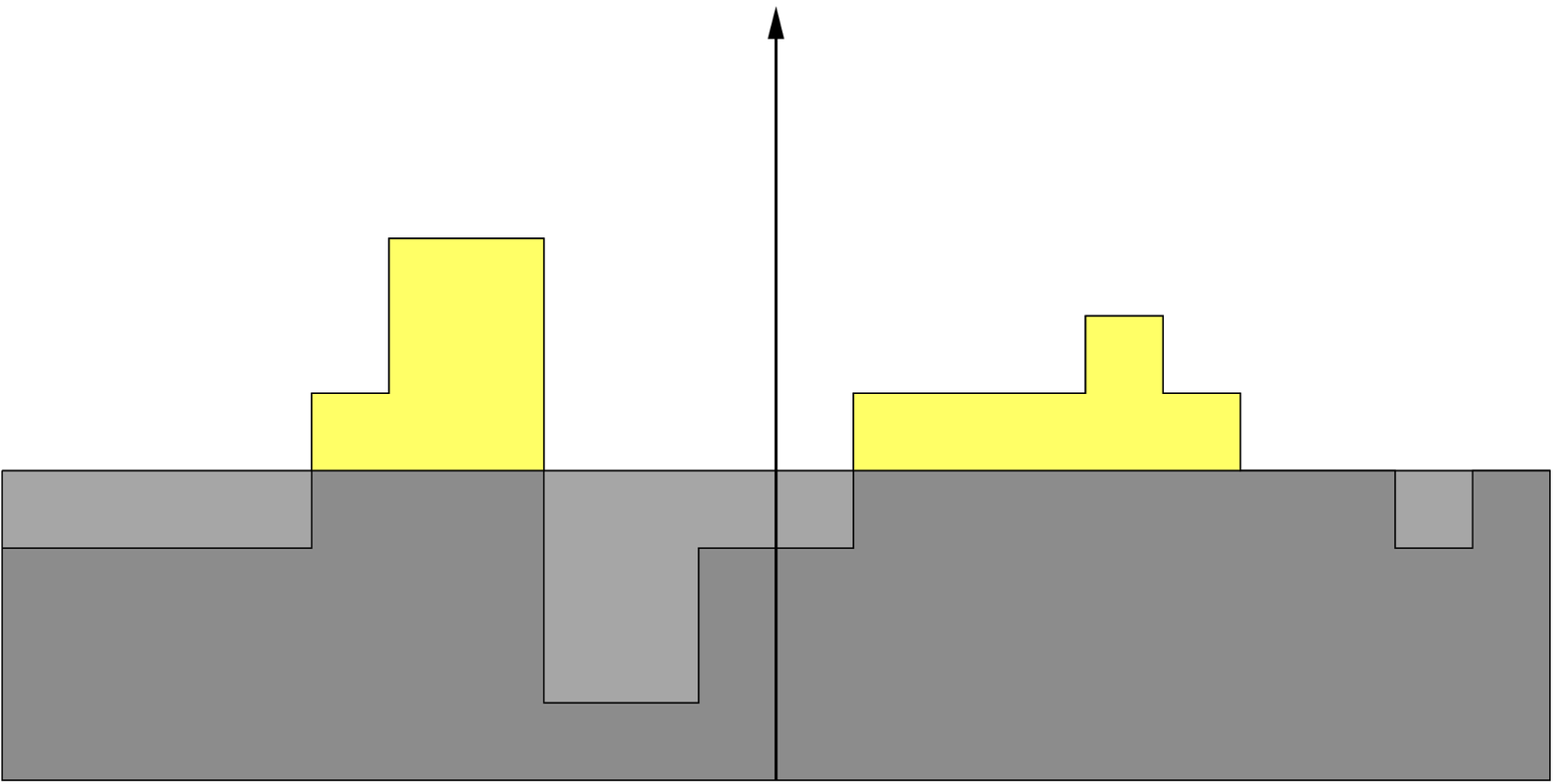}
\caption{In dark gray the set $E\cap H_{\unit,s}$, in light gray the set $H_{\unit,s}\setminus E$, 
and in yellow the set $E\setminus H_{\unit,s}$.}
\label{fig1}

\vspace{10pt}
\centering
\includegraphics[width=10cm]{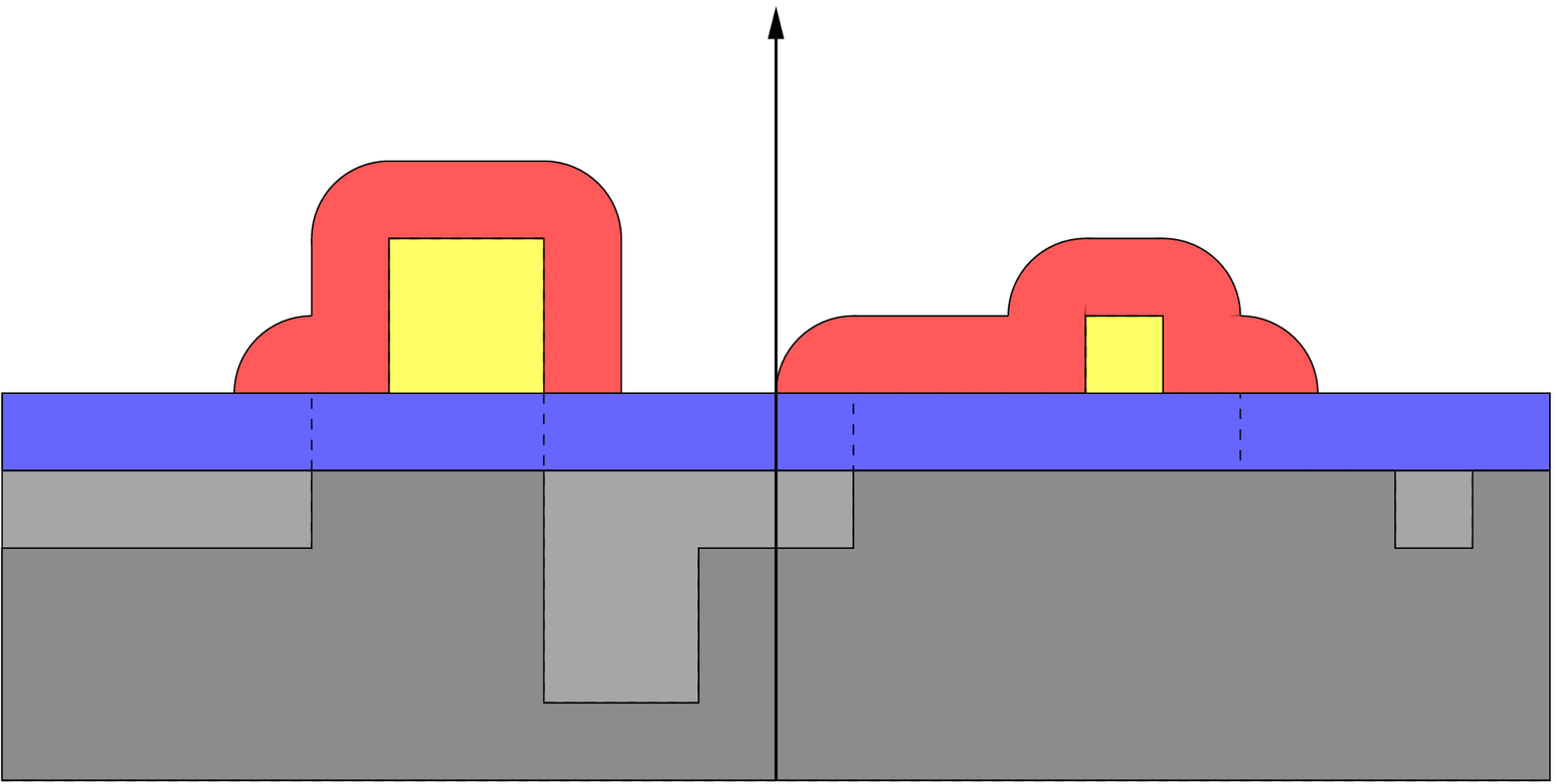}
\caption{In blue the set $H_{\unit,s+r}\setminus H_{\unit,s}$, 
and in red and yellow the set $(E+B_r)\setminus H_{\unit,s+r}$.}
\label{fig2}
\end{figure}

\vspace{6pt}
We need also a second lemma, which is a crucial point in the proof of Theorem \ref{main theorem}.
The lemma states that the $r$-enlargement of a half-space $H_s$ cannot completely cover the $r$-enlargement of the set $E$,
as depicted in Figures~\ref{fig1}-\ref{fig2}. This simple geometric fact is essentially due to the convexity of the half-space 
and therefore it is not surprising that a similar result holds also in the Euclidian case for the enlargement with a given 
convex set $K$ (see Lemma \ref{lemma eucl 1} in the next section).

\begin{lemma}\label{lemma gauss}
For every $\unit\in\Sf^{n-1}$, $s\in\R$, $r\in(0,1]$, and for every subset $E \subset \R^n$ 
 such that  $\gamma(E) = \phi(s)$ the following estimate holds:
\begin{equation*}
\gamma((E+B_r) \setminus H_{\unit,s+r}) \geq \frac{e^{-s^+}}{5}\,\gamma(E \setminus H_{\unit,s}).
\end{equation*}

\vspace{3pt}
\noindent
Here $s^+=\max\{s,0\}$. 
\end{lemma}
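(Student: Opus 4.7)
The first step is a reduction. Setting $A := E \setminus H_{\unit,s}$ and using $H_{\unit,s} + B_r = H_{\unit,s+r}$, the piece $E \cap H_{\unit,s}$ contributes nothing to the overflow past $H_{\unit,s+r}$, so
\[
(E+B_r) \setminus H_{\unit,s+r} = (A+B_r) \cap H_{\unit,s+r}^c.
\]
In particular, the hypothesis $\gamma(E) = \phi(s)$ enters only through $\gamma(A)$, and it suffices to bound $\gamma\bigl((A+B_r) \cap H_{\unit,s+r}^c\bigr)$ from below by $(e^{-s^+}/5)\gamma(A)$.

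I would then split $A$ at the level set $\{\langle x,\unit\rangle = s+r\}$ into $A_{\text{near}} := A \cap \{s \le \langle x,\unit\rangle \le s+r\}$ and $A_{\text{far}} := A \cap \{\langle x,\unit\rangle > s+r\}$. The far piece sits in $H_{\unit,s+r}^c$ automatically, so $A_{\text{far}} \subseteq (A+B_r) \cap H_{\unit,s+r}^c$. The translate $A_{\text{near}} + r\unit$ also lies in the target set (it is contained in $A + \overline{B_r}$ and in $\{\langle x,\unit\rangle \ge s+r\}$), and its Gaussian mass is controlled by the explicit density change: since $\langle a,\unit\rangle \le s+r$ on $A_{\text{near}}$,
\[
\gamma(A_{\text{near}} + r\unit) = \int_{A_{\text{near}}} e^{-r\langle a,\unit\rangle - r^2/2}\,d\gamma(a) \ge e^{-r(s+r)-r^2/2}\,\gamma(A_{\text{near}}).
\]
For $r \in (0,1]$ this is at least $e^{-s^+-3/2}\gamma(A_{\text{near}})$, and the elementary inequality $e^{3/2} < 5$ upgrades it to $\gamma(A_{\text{near}}+r\unit) \ge \tfrac{e^{-s^+}}{5}\gamma(A_{\text{near}})$.

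The final step is to combine $A_{\text{far}}$ and $A_{\text{near}}+r\unit$ so that one obtains the full mass $(e^{-s^+}/5)\gamma(A)$. The translate lives in the slab $\{s+r \le \langle x,\unit\rangle \le s+2r\}$, so after further splitting $A_{\text{far}} = A_{\text{middle}} \cup A_{\text{top}}$ at $s+2r$, the top piece $A_{\text{top}}$ is disjoint from $A_{\text{near}}+r\unit$, giving the genuine disjoint decomposition $(A_{\text{near}}+r\unit) \sqcup A_{\text{top}} \subseteq (A+B_r) \cap H_{\unit,s+r}^c$; on the other hand $A_{\text{middle}}$ itself sits in the target set. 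A short trichotomous case analysis on which of $A_{\text{near}}$, $A_{\text{middle}}$, or $A_{\text{top}}$ carries most of the mass of $A$ then delivers the claimed bound. The main obstacle is the sharpness of the constant: the density ratio produces the factor $e^{-s^+-3/2}$, which only beats the target $e^{-s^+}/5$ by the slim margin $e^{-3/2} - 1/5 \approx 0.02$, so one cannot replace the stitching argument by a crude maximum of the two contributions and must exploit the disjointness of $A_{\text{top}}$ from the translate to avoid losing a constant factor.
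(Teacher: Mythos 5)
Your approach is conceptually the same as the paper's: split at the level $\langle x,\unit\rangle = s+r$, translate the ``near'' slab $A_{\text{near}}$ by $r\unit$ so that it lands above $s+r$, and pay the explicit Gaussian density loss. Your density bound $e^{-r(s+r)-r^2/2}\ge e^{-s^+-3/2}$ is exactly the paper's $e^{-(3+2s^+)/2}$, and your reduction to $A=E\setminus H_{\unit,s}$ is a harmless reformulation. You also correctly spot the overlap issue that the paper's write-up glosses over: the paper asserts $\gamma((E+B_r)\setminus H_{\unit,s+r})=\gamma((E^++B_r)\setminus H_{\unit,s+r})+\gamma((E^-+B_r)\setminus H_{\unit,s+r})$, but these two sets both live in the slab $[s+r,s+2r)$ and need not be disjoint, so the equality should really be ``$\le$'' — your three-way split into $A_{\text{near}},A_{\text{middle}},A_{\text{top}}$ with $A_{\text{top}}$ genuinely disjoint from the translate is the right instinct to address this.

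However, the ``short trichotomous case analysis'' you describe does not close the argument with the constant $1/5$. From your decomposition, the only lower bound available on the target is
\[
\gamma\bigl((E+B_r)\setminus H_{\unit,s+r}\bigr)\ge \max\bigl\{\mu\,\gamma(A_{\text{near}}),\,\gamma(A_{\text{middle}})\bigr\}+\gamma(A_{\text{top}}),\qquad \mu=e^{-s^+-3/2},
\]
since you cannot rule out $A_{\text{middle}}\subset A_{\text{near}}+r\unit$. Now take $\gamma(A_{\text{top}})=0$ and $\gamma(A_{\text{middle}})=\mu\,\gamma(A_{\text{near}})$: the max collapses, and the best constant you can extract is $\mu/(1+\mu)$. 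At $s=0$ this equals $e^{-3/2}/(1+e^{-3/2})\approx 0.182$, which is strictly less than $1/5=0.2$; more precisely, $\mu/(1+\mu)\ge e^{-s^+}/5$ only when $s^+\gtrsim 0.66$. So for $s\le 0$ (and small positive $s$) the trichotomous case analysis you gesture at cannot produce the stated constant, and some additional structural input is needed — either a finer accounting of the overlap $A_{\text{middle}}\cap(A_{\text{near}}+r\unit)$, a one-dimensional slicing/rearrangement along the $\unit$-direction, or an injective transport of $E\setminus H_{\unit,s}$ into $(E+B_r)\setminus H_{\unit,s+r}$ — before the lemma is actually proved with the constant $e^{-s^+}/5$.
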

\begin{proof}
We split the set $E$ in two parts: 
$E^+:=\{x\in E \colon \la x,\unit\ra\geq s+r \}$ and $E^-:=E\setminus E^+$.
Of course 
\begin{equation}\label{lemma 1-1}
(E^++B_r)\setminus H_{\unit,s+r}\supset E^+\setminus H_{\unit,s+r}=E^+.
\end{equation}

\vspace{3pt}
\noindent
On the other hand, $E^-+B_r\supset E^-+r'\unit$ for any $r'\in(0,r)$ and 
\begin{equation*}\begin{split}
\gamma((E^-+r'\unit)\setminus H_{\unit,s+r'})
&=\frac{1}{(2\pi)^{\frac{n}{2}}}\int_{(E^-+r'\unit)\setminus H_{\unit,s+r'}} e^{-\frac{|x|^2}{2}}dx\\
&=\frac{1}{(2\pi)^{\frac{n}{2}}}\int_{(E^-)\setminus H_{\unit,s}} e^{-\frac{|x+r'\unit|^2}{2}}dx\\
&\geq\frac{1}{(2\pi)^{\frac{n}{2}}}e^{-\frac{(3+2s^+)}{2}}\int_{(E^-)\setminus H_{\unit,s}} e^{-\frac{|x|^2}{2}}dx\\
&\geq \frac{e^{-s^+}}{5}\,\gamma(E^-\setminus H_{\unit,s}),
\end{split}\end{equation*}
since $|x+r'\unit|^2\leq |x|^2+3+2s^+$ in $E^-$. 
Therefore, letting $r' \to r$, we have
\begin{equation}\label{lemma 1-2}
\gamma((E^-+B_r)\setminus H_{\unit,s+r})\geq \frac{e^{-s^+}}{5}\,\gamma(E^-\setminus H_{\unit,s}).
\end{equation}
Finally, from \eqref{lemma 1-1} and \eqref{lemma 1-2} we get
\begin{equation*}\begin{split}
\gamma((E+B_r) \setminus H_{\unit,s+r})
=&\gamma((E^++B_r) \setminus H_{\unit,s+r}) + \gamma((E^-+B_r) \setminus H_{\unit,s+r})\\
\geq&\gamma(E^+)+\frac{e^{-s^+}}{5}\,\gamma(E^-\setminus H_{\unit,s})\\
\geq&\frac{e^{-s^+}}{5}\,\gamma(E\setminus H_{\unit,s}).
\end{split}\end{equation*}
\end{proof}

\vspace{4pt}
\begin{proof}[\textbf{Proof of Theorem \ref{main theorem}}]
 Let us first show that we may assume 
\begin{equation}\label{may assume}
 \gamma(E+B_r) \leq \phi(s+r+1).
\end{equation}
To this aim we first estimate 
\begin{equation}\label{bound on rho 1}
\phi(s+r+1) - \phi(s+r) = \frac{1}{\sqrt{2\pi}} \int_{s+r}^{s+r+1} e^{-\frac{t^2}{2}} \, dt \geq \frac{1}{\sqrt{2\pi}} e^{-\frac{(|s|+r+1)^2}{2}}.
\end{equation}
On the other hand 
\begin{equation}\label{bound on rho 2}
\begin{split}
\frac{\alpha_\gamma(E)}{2} \leq \phi(-|s|) &=  \frac{1}{\sqrt{2\pi}} \int_{|s|}^{\infty} e^{-\frac{t^2}{2}} \, dt \\
&= \frac{1}{\sqrt{2\pi}}\left( \int_{|s|}^{|s|+1} e^{-\frac{t^2}{2}} \, dt  + \int_{|s|+1}^{\infty} e^{-\frac{t^2}{2}}dt   \right)\\
&\leq \frac{1}{\sqrt{2\pi}}\left( e^{-\frac{s^2}{2}}  + \int_{|s|+1}^{\infty} te^{-\frac{t^2}{2}}dt   \right) \\
&\leq \frac{1}{\sqrt{2\pi}}\left( e^{-\frac{s^2}{2}} +   e^{-\frac{(|s|+1)^2}{2}} \right) \leq \frac{2}{\sqrt{2\pi}} e^{-\frac{s^2}{2}}.
\end{split}
\end{equation}
Assume now that  \eqref{may assume} does not hold. Then we have by \eqref{bound on rho 1} and \eqref{bound on rho 2} that 
\[
\begin{split}
 \gamma(E+B_r) - \phi(s+r) &\geq  \phi(s+r+1) - \phi(s+r) \\ 
&\geq  \frac{1}{\sqrt{2\pi}} e^{-\frac{(|s|+r+1)^2}{2}} \\
 &\geq c \, e^{s^2}  e^{-\frac{(|s|+r+1)^2}{2}} \alpha_\gamma^2(E).
\end{split}
\]
Hence if \eqref{may assume}  does not hold  then  \eqref{main estimate} is true. 

Because of the non-monotonicity of the quantity $\gamma(E+B_r)-\phi(s+r)$, 
we have to divide the rest of the proof in several steps.  We first prove the theorem when  $r\in(0,1]$. We divide this part of the proof in two cases.

\noindent \textbf{Case 1.}  We first assume  that for all $\rho\in(0,r]$ it holds
\begin{equation}\label{dicotomy}
\gamma(E+B_\rho)-\phi(s+\rho) \leq \eps\,e^{\frac{s^2}{2}}e^{-3|s|}\alpha_\gamma^2(E),
\end{equation}
where $\eps>0$ is a small number to be chosen later.

We define an auxiliary function $f:(0,r)\to\R$ by 
\begin{equation*}
f(\rho) := P_\gamma(E+ B_\rho) - e^{-(s+\rho)^2/2}.
\end{equation*}
Then by Lemma \ref{lemma gauss 0}
\begin{equation*}
\gamma(E+ B_r)-\phi(s+r) \geq \frac{1}{\sqrt{2\pi}}\int_0^rf(\rho)d\rho.
\end{equation*}
Therefore in order to prove \eqref{main estimate}, it is  enough to estimate $f(\rho)$. Let us fix $\rho \in (0,r)$.
Let $\hat{\rho} >0$ be such that $\gamma(E+ B_\rho)=\phi(s+\hat{\rho})$. 
Note that by the concentration inequality $\hat{\rho}\geq \rho$.  Moreover  \eqref{may assume} implies that $\hat{\rho}\leq 2$.

Let $\unit_\rho\in\Sf^{n-1}$ be a direction that realizes 
$\min_{\unit\in\Sf^{n-1}}\gamma((E+B_\rho)\triangle H_{\unit,s+\hat{\rho}})$, and let $s^-=-\min\{s,0\}$.
By the stability of the Gaussian isoperimetric inequality \eqref{quanti standard new} 
and by Lemma \ref{lemma gauss} we have 
\begin{equation*}\begin{split}
f(\rho) &= \left(P_\gamma(E+ B_\rho) - e^{-\frac{(s+\hat{\rho})^2}{2}} \right) 
+ e^{-\frac{(s+\hat{\rho})^2}{2}} - e^{-\frac{(s+\rho)^2}{2}}\\
&\geq c\,\frac{e^{\frac{(s+\hat{\rho})^2}{2}}}{1+(s+\hat{\rho})^2}\,\gamma((E+B_\rho)\triangle H_{\unit_\rho,s+\hat{\rho}})^2 
+e^{-\frac{(s+\hat{\rho})^2}{2}} - e^{-\frac{(s+\rho)^2}{2}}\\
&\geq c\, \frac{e^{\frac{s^2}{2}}e^{-2s^-}}{(1 + s^2)}\, \bigl(\gamma((E+B_\rho)\setminus H_{\unit_\rho,s+\rho})
-\gamma(H_{\unit_\rho,s+\hat{\rho}}\setminus H_{\unit_\rho,s+\rho})\bigr)^2
+e^{-\frac{(s+\hat{\rho})^2}{2}} - e^{-\frac{(s+\rho)^2}{2}}\\
&\geq c\, \frac{e^{\frac{s^2}{2}}e^{-2s^-}}{(1 + s^2)}\, \Bigl(\frac{e^{-s^+}}{10}\alpha_\gamma(E)
-\gamma(H_{\unit_\rho,s+\hat{\rho}}\setminus H_{\unit_\rho,s+\rho}) \Bigr)^2
+e^{-\frac{(s+\hat{\rho})^2}{2}} - e^{-\frac{(s+\rho)^2}{2}}.
\end{split}\end{equation*}
By the definition of $\hat \rho$, by \eqref{dicotomy} and by  \eqref{bound on rho 2} we have 
\[
\begin{split}
\gamma(H_{\unit_\rho,s+\hat{\rho}}\setminus H_{\unit_\rho,s+\rho}) &= \gamma(E + B_\rho) - \phi(s+ \rho)\\
&\leq \eps\,e^{\frac{s^2}{2}}e^{-3|s|}\alpha_\gamma^2(E) \leq\frac{e^{-3|s|}}{20}\alpha_\gamma(E)
\end{split}
\]
when $\eps$ is small enough. We use  \eqref{dicotomy} to estimate 
\begin{equation*}\begin{split} 
e^{-\frac{(s+\hat{\rho})^2}{2}} - e^{-\frac{(s+\rho)^2}{2}}
&=-\int_{s+\rho}^{s+\hat{\rho}}te^{-\frac{t^2}{2}}dt
\geq-(|s|+2)\int_{s+\rho}^{s+\hat{\rho}} e^{-\frac{t^2}{2}}dt\\
&= -\sqrt{2\pi}\,(|s|+2) \left(\gamma(E+B_\rho)-\phi(s+\rho) \right) \\
&\geq-\eps\,\sqrt{2\pi}\,(|s|+2)  e^{\frac{s^2}{2}}e^{-3|s|}\alpha_\gamma^2(E).
\end{split}\end{equation*}
Therefore by the previous three estimates we have 
\begin{equation*}\begin{split}
f(\rho)  &\geq  c\, \frac{e^{\frac{s^2}{2}}e^{-2|s|}}{(1 + s^2)}\, \alpha_\gamma^2(E) 
-\eps\,\sqrt{2\pi}\,(|s|+2)  e^{\frac{s^2}{2}}e^{-3|s|}\alpha_\gamma^2(E).\\
&\geq c\, e^{\frac{s^2}{2}}e^{-3|s|}\, \alpha_\gamma^2(E) 
\end{split}\end{equation*}
when $\eps$ is small enough. Thus we have the claim  \eqref{main estimate}  in this case. 

\medskip
\noindent \textbf{Case 2.}
In this case we assume that there is $\rho \in (0,r]$ such that 
\begin{equation}\label{dicotomy 2}
\gamma(E+B_\rho)-\phi(s+\rho) \geq \eps\,e^{\frac{s^2}{2}}e^{-3|s|}\alpha_\gamma^2(E).
\end{equation}

Let $\hat \rho >0$ be such that 
\[
\gamma(E+B_\rho) = \phi(s+\hat \rho).
\]
The concentration inequality implies 
\begin{equation}\label{case 2 concentration}
\gamma(E+B_r) \geq \phi(s+\hat \rho +r - \rho).
\end{equation}
Note that  \eqref{may assume}  gives $\hat \rho \leq 2$. We may therefore estimate 
\[
\begin{split}
\phi(s+\hat \rho +r - \rho) - \phi(s+r) &= \frac{1}{\sqrt{2 \pi}} \int_{s +r }^{s+\hat \rho +r - \rho} e^{-\frac{t^2}{2}} \, dt =  \frac{1}{\sqrt{2 \pi}} \int_{s +\rho}^{s+\hat \rho} e^{-\frac{(t+r-\rho)^2}{2}} \, dt \\
&\geq c \, e^{-s^+} \int_{s +\rho }^{s+\hat \rho } e^{-\frac{t^2}{2}} \, dt = c \, e^{-s^+} (\phi(s+\hat \rho) - \phi(s+\rho)).
\end{split}
\]
We deduce from  \eqref{case 2 concentration}, from the definition of $\hat \rho$ and from \eqref{dicotomy 2} that
\[
\gamma(E+B_r) - \phi(s+r) \geq c \, e^{-s^+} (\gamma(E+B_\rho)-\phi(s+\rho) ) 
\geq c \, \eps \, e^{\frac{s^2}{2}}e^{-4|s|}\alpha_\gamma^2(E),
\]
which proves the claim  \eqref{main estimate}.  

\medskip

We are left to prove  the claim \eqref{main estimate} when $r >1$.  Since we have already proved the result for $r=1$ we have that 
\[
\gamma(E+B_1) - \phi(s+1) \geq c_1 \, e^{\frac{s^2}{2}}e^{-\frac{(|s|+5)^2}{2}}\alpha_\gamma^2(E),
\]
for an absolute constant $c_1>0$.
The rest of the proof is the same as in the Case 2 above.  Let $\hat \rho \geq 1$ be such that 
\[
\gamma(E+B_1) = \phi(s+\hat \rho).
\]
The concentration inequality implies 
\[
\gamma(E+B_r) \geq \phi(s+\hat \rho +r - 1).
\]
Note that  \eqref{may assume} and the above inequality give $\hat \rho \leq 2$ and we may  estimate as before
\[
\begin{split}
\phi(s+\hat \rho +r - 1)- \phi(s+r) &= \frac{1}{\sqrt{2 \pi}} \int_{s +r }^{s+\hat \rho +r - 1} e^{-\frac{t^2}{2}} \, dt \\
&\geq \frac{1}{\sqrt{2 \pi}}\, e^{-(|s|+1)(r-1)} \, e^{- \frac{r^2}{2}} \int_{s +1 }^{s+\hat \rho } e^{-\frac{t^2}{2}} \, dt \\
&= e^{-(|s|+1)(r-1)} \, e^{- \frac{r^2}{2}} (\phi(s+\hat \rho) - \phi(s+1)).
\end{split}
\]
The four estimates above yield
\[
\gamma(E+B_r) - \phi(s+r) \geq c_1 \, e^{s^2} e^{-\frac{(|s| + 5)^2}{2}} e^{-(|s|+1)(r-1)} e^{-\frac{r^2}{2}} \, \alpha_\gamma^2(E)
\]
and the claim  \eqref{main estimate} follows.
\end{proof}

\begin{remark}
The best known value for the constant $c$ in the isoperimetric estimate \eqref{quanti standard new} is 
$c_{\text{iso}}=1/(48\sqrt{2\pi})$, obtained by a slightly refinement of the argument in \cite{BBJ}. 
A careful study of the proof of Theorem \ref{main theorem}
shows that one can take the value for the constant $c$ in \eqref{main estimate} to be $c_{\text{iso}}/2000$. 
In both the cases the values are not optimal. 
\end{remark}

%%%%%%%%%%%%%%%%%%%%%%%%%%%%%%%%%%%%%%%%%%%%%%%%%%%%%%%%%%%%%%%%%%%%%%%%%%%%%%%%%%%%%%%%%%%%%%%%%%%%%%%%%%%

\section{The Euclidean concentration}

\noindent
In this section we provide a proof of Theorem \ref{main thm 2}.
The symbol $c_n$ will denote a positive constant depending on $n$, whose value is 
not specified and which may vary from line to line.

The proof of Theorem~\ref{main thm 2} is 
based on the quantitative Wulff inequality  provided in \cite{FigMP}. 
Let us briefly introduce some notation. We set
\begin{equation*}
||\nu||_* =  \sup_{x \in K} \la x, \nu \ra, \qquad \nu \in \Sf^{n-1}
\end{equation*}
and define the anisotropic perimeter for a set $E$ with locally finite perimeter as
\begin{equation*}
P_K(E) = \int_{\partial^* E} ||\nu||_* \, d\Ha^{n-1},
\end{equation*}
where $\partial^* E$ denotes the reduced boundary of $E$. When  $E$ is  open with Lipschitz boundary we have
\begin{equation*}
P_K(E) =  \lim_{r \to 0} \frac{|E+ rK| - |E|}{r}.
\end{equation*}    
The result in \cite{FigMP} states that for every set $E$ of locally finite perimeter with $|E| = |sK|$ it holds
\begin{equation} \label{FigMP} 
P_K(E) - P_K(sK) \geq \frac{c_n}{|K|} \frac{1}{s^{n+1}} \alpha^2(E).
\end{equation}

The following lemma  is the counterpart of Lemma \ref{lemma gauss 0} in the Euclidean case. 
\begin{lemma} \label{lemma eucl 0}
Let $r>0$ and let  $E$ be a measurable set such that $|E+rK| < \infty$. Then 
\begin{equation*}
|E+rK|\geq |E| + \int_0^r P_K(E + \rho K)\, d\rho.
\end{equation*}
\end{lemma}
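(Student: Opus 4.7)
The plan is to realize $|E+rK|-|E|$ as the integral of the one-sided derivative of the volume function $g(\rho):=|E+\rho K|$, and to identify that derivative with $P_K(E+\rho K)$ from below.

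First I would record elementary properties of $g$ on $[0,r]$. Since $K$ is convex and contains the origin, $\rho K+\sigma K=(\rho+\sigma)K$, so writing $F_\rho:=E+\rho K$ we have the semigroup identity
\[
g(\rho+\sigma)-g(\rho)=|F_\rho+\sigma K|-|F_\rho|.
\]
In particular $g$ is non-decreasing, and it is continuous: as $\sigma\downarrow 0$ the sets $F_{\rho+\sigma}$ decrease to $E+\rho\overline{K}$, which differs from $F_\rho$ only on the null set $E+\rho\partial K$ (convex body boundaries being Lebesgue-negligible). Being monotone and finite on $[0,r]$, $g$ is differentiable a.e.\ and satisfies
\[
g(r)-g(0)\geq \int_0^r g'(\rho)\,d\rho.
\]

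It therefore suffices to prove the pointwise bound $g'(\rho)\geq P_K(F_\rho)$ for a.e.\ $\rho\in(0,r)$, or, more robustly, the one-sided estimate
\[
\liminf_{\sigma\to 0^+}\frac{|F_\rho+\sigma K|-|F_\rho|}{\sigma}\geq P_K(F_\rho)\qquad\text{for every } \rho\in(0,r).
\]
This is the step I expect to be the main technical point. For $\rho>0$ the set $F_\rho$ is open (since $\rho K$ is) and is locally a union of translates of the open convex body $\rho K$; hence one expects $\partial F_\rho$ to be locally Lipschitz with constant controlled by the geometry of $K$. Granted this regularity, the formula $P_K(F)=\lim_{\sigma\to 0^+}(|F+\sigma K|-|F|)/\sigma$ quoted in the excerpt applies to $F=F_\rho$ and in fact gives equality rather than mere inequality. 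Integrating the resulting pointwise bound over $(0,r)$ then completes the proof.

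Should the Lipschitz property of $\partial F_\rho$ require additional justification, an alternative route is to apply the identity first to smooth inner approximants $G_j$ of $E$ with Lipschitz boundary (so that $G_j+\rho K$ is automatically Lipschitz), use the lower semi-continuity of $P_K$ under $L^1$-convergence together with the continuity of $\rho\mapsto|G_j+\rho K|$, and pass to the limit $j\to\infty$. Either way, the essential difficulty is purely infinitesimal: matching the Minkowski content of $F_\rho$ with its anisotropic perimeter.
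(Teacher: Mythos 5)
The paper itself does not prove this lemma: it states it as ``the counterpart of Lemma~\ref{lemma gauss 0} in the Euclidean case,'' and for that Gaussian lemma it defers to a modification of \cite[Lemma~2.1]{CM16}. So there is no in-paper proof to compare against, and your attempt must be judged on its own.

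Your skeleton is the right one --- $g(\rho):=|E+\rho K|$ is non-decreasing because $0\in K$, a non-decreasing finite function satisfies $g(r)-g(0)\geq\int_0^r g'(\rho)\,d\rho$, and it remains to show $g'(\rho)\geq P_K(E+\rho K)$ a.e. But the way you try to justify this last step has a genuine gap. The claim that $\partial(E+\rho K)$ is locally Lipschitz because $E+\rho K$ is a union of translates of the open convex set $\rho K$ is simply false: take $K=B_1$ and $E$ a union of two small disjoint balls; for the value of $\rho$ at which the enlarged balls become externally tangent, $\partial(E+\rho K)$ has a cusp and is not a Lipschitz graph near the tangency point. Having an interior touching convex body at every boundary point gives no control on the boundary from outside. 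The fallback approximation argument has the same defect: you assert that $G_j+\rho K$ is ``automatically Lipschitz'' when $G_j$ is, which again fails by the same tangency example, so you have not actually reduced to a case where the quoted limit formula applies.

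The correct ingredient, which requires no boundary regularity at all, is the one-sided inequality
\[
\liminf_{\sigma\to 0^+}\frac{|F+\sigma K|-|F|}{\sigma}\ \geq\ P_K(F)
\]
valid for every measurable $F$ with $|F|<\infty$ (it gives $P_K(F)=+\infty$ a consistent meaning as well). This is the anisotropic analogue of the classical fact that the lower Minkowski content dominates the perimeter, and is proved by testing the BV definition of $P_K$ against the Lipschitz cutoffs $u_\sigma(x)=\max\{0,1-d_K(x,F)/\sigma\}$ (with $d_K$ the gauge distance associated to $K$), computing $\int \|\nabla u_\sigma\|_* \leq (|F+\sigma K|-|F|)/\sigma$, and using lower semicontinuity of $P_K$ as $u_\sigma\to\chi_F$ in $L^1$. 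Applied to $F=E+\rho K$ for a.e.\ $\rho$ at which $g'(\rho)$ exists, this yields $g'(\rho)\geq P_K(E+\rho K)$ directly, and in particular $P_K(E+\rho K)<\infty$ a.e.\ since $g'$ is integrable. With this replacement, and dropping the (unnecessary and also imperfect) continuity discussion --- monotonicity alone gives $g(r)-g(0)\geq\int_0^r g'$ --- your argument closes. I would also note that your continuity claim relies on $|E+\rho\partial K|=0$, which need not hold for a general measurable $E$, but this is moot once the continuity step is removed.
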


Similarly to Lemma \ref{lemma gauss}, the measure of $E \setminus K$ is increasing along the growth.
\begin{lemma}\label{lemma eucl 1}
Let $E \subset \R^n$ be a measurable set such that $|E|= |K|$.  Then for every $r>0$ it holds
\begin{equation*}
|(E+rK) \setminus (1+r)K| \geq |E \setminus K|.
\end{equation*}
\end{lemma}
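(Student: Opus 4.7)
The plan is to construct an explicit injective map $T \colon E \setminus K \to (E+rK) \setminus (1+r)K$ whose Jacobian is bounded below by $1$. Once such a $T$ is produced, the change of variables formula immediately gives
\[
|(E+rK) \setminus (1+r)K| \,\geq\, |T(E \setminus K)| \,\geq\, |E \setminus K|,
\]
which is precisely the desired inequality. The construction is a radial dilation with respect to $K$, and it plays the same role here as the splitting into $E^\pm$ followed by the translation $E^- + r'\unit$ does in Lemma \ref{lemma gauss}: the Minkowski gauge of $K$ replaces the linear functional $\la\cdot,\unit\ra$.

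Let $\mu(x) := \inf\{t>0 \colon x \in tK\}$ denote the Minkowski gauge of $K$, so that $K = \{\mu<1\}$. For $x \in E \setminus K$ one has $\mu(x) \geq 1$, and I set
\[
T(x) \,:=\, x + r \,\frac{x}{\mu(x)}.
\]
By the positive $1$-homogeneity of $\mu$, the displacement $T(x)-x = r\,x/\mu(x)$ has gauge exactly $r$, hence belongs to $\overline{rK}$. Moreover $\mu(T(x)) = \mu(x)+r \geq 1+r$, so $T(x) \notin (1+r)K$. To force the displacement into the open set $rK$ rather than its closure, I work with the regularized map $T_\eps(x) := x + (r-\eps)\,x/\mu(x)$ on the restricted domain $B_\eps := \{x \in E \setminus K \colon \mu(x) \geq 1+\eps\}$. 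Then $T_\eps(x)-x \in rK$ (gauge $r-\eps < r$) and $\mu(T_\eps(x)) \geq 1+r$, so $T_\eps(B_\eps) \subset (E+rK) \setminus (1+r)K$.

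To bound $|T_\eps(B_\eps)|$ from below I use $K$-polar coordinates: every $x \neq 0$ decomposes uniquely as $x = t\omega$ with $t = \mu(x)$ and $\omega \in \partial K$, and Lebesgue measure factorizes as $dx = t^{n-1}\,dt\,d\nu(\omega)$ for an appropriate cone measure $\nu$ on $\partial K$. In these coordinates $T_\eps$ acts as the radial shift $(t,\omega) \mapsto (t+r-\eps,\omega)$, which is manifestly injective, and the change of variables yields
\[
|T_\eps(B_\eps)| \,=\, \int_{B_\eps} (t+r-\eps)^{n-1}\,dt\,d\nu(\omega) \,\geq\, \int_{B_\eps} t^{n-1}\,dt\,d\nu(\omega) \,=\, |B_\eps|,
\]
provided $\eps \leq r$. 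Sending $\eps \to 0$ finally gives $|B_\eps| \to |E \setminus K|$, since $\partial K$ has zero Lebesgue measure (as $K$ is a convex body with nonempty interior), and the claim follows.

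The only delicate point is the boundary bookkeeping around $\partial K$ and $\partial(rK)$, dispatched cleanly by the $\eps$-regularization; the geometric content—that the radial dilation automatically sends $E \setminus K$ outside $(1+r)K$—rests solely on the convexity of $K$ encoded in the $1$-homogeneity and sublinearity of $\mu$, mirroring the one-dimensional translation used in the Gaussian case.
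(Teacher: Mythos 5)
Your proof is correct, and it takes a genuinely different route from the paper's. The paper applies the concentration inequality \eqref{aniso concentration} to the set $E\cup K$: writing $|E\cup K|=|r_0K|$, it uses $(E\cup K)+rK=(E+rK)\cup(1+r)K$ to turn the claim into $|(r_0+r)K|-|(1+r)K|\geq (r_0^n-1)|K|$, which is an elementary monotonicity statement about polynomials. You instead build an explicit measure-expanding injection from $E\setminus K$ into $(E+rK)\setminus(1+r)K$ via the gauge-radial dilation $x\mapsto x+(r-\eps)x/\mu(x)$, and verify the expansion property in $K$-polar coordinates, mirroring very directly the translation map $x\mapsto x+r'\unit$ used in Lemma~\ref{lemma gauss}. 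Both are sound; the paper's argument is shorter and leans on the already-available concentration inequality (itself Brunn--Minkowski), whereas yours is self-contained and elementary, trading a couple of lines for some bookkeeping: the $\eps$-regularization to keep the displacement in the \emph{open} set $rK$ and the image outside the \emph{open} set $(1+r)K$, the factorization $dx=t^{n-1}\,dt\,d\nu(\omega)$ (valid because cone volumes over a boundary patch scale like $t^n$ by homogeneity of $\mu$), and the limit $|B_\eps|\to|E\setminus K|$, which holds because $\{\mu=1\}=\partial K$ is Lebesgue-null. Your approach also makes the structural analogy with the Gaussian lemma transparent: the gauge $\mu$ plays exactly the role of the linear functional $\la\cdot,\unit\ra$, and the radial dilation replaces the one-dimensional translation.
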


\begin{proof}
Let $r_0\geq 1$ be such that $|E \cup K| = |r_0 K|$. Then 
\begin{equation} \label{symm diff}
|E\setminus K| = |E\cup K| - |K| = (r_0^n -1)|K|. 
\end{equation}
By the concentration inequality \eqref{aniso concentration}  it holds 
\[
|(E\cup K) + rK| \geq |(r_0+r)K|.
\]
Since $[(E+rK)\setminus(1+r)K]\cup(1+r)K=(E\cup K)+rK$, one has
\[
|(E\cup K) + rK| - |(1+r)K| = |(E+rK) \setminus (1+r)K|. 
\]
These together yield 
\[
\begin{split}
|(E+rK) \setminus (1+r)K| &\geq |(r_0+r)K| - |(1+r)K| \\
&= ((r+r_0)^n- (1+r)^n)|K| \geq  (r_0^n-1)|K|,
\end{split}
\]
where the last inequality follows from the fact that $t \mapsto ((t+r_0)^n- (1+t)^n)$ is nondecreasing. 
The claim then follows from \eqref{symm diff}. 
\end{proof}

\begin{proof}[\textbf{Proof of Theorem \ref{main thm 2}}]
By scaling we may assume that $|K|=1$. Let us first prove the claim when $r \in (0,1]$.  
We may assume that $|E+ rK|\leq |3K|$. Indeed if $|E+ rK|> |3K|$ then
\begin{equation*}
|E+rK|-|(1+r)K|>|3K|-|2K|=3^n-2^n\geq \frac{3^n-2^n}{4}\alpha^2(E).
\end{equation*}

Let us define $f:(0,r) \to \R$,
\begin{equation*}
f(\rho) = P_K(E+ \rho K) - P_K((1+ \rho)K).
\end{equation*}
By Lemma \ref{lemma eucl 0} we have that $f(\rho) < \infty$ for almost every $\rho$. 
By the concentration inequality \eqref{aniso concentration} it holds $|E+ \rho K|\geq |(1+ \rho)K|$ 
for every $\rho \in (0,r)$. Let us fix $\rho < r$ and let $\hat \rho \geq \rho$ 
be such that $|(1+ \hat \rho)K| =|E+ \rho K|$. 
Then by $|E+ rK|\leq |3K|$ we have $\hat \rho \leq 2$.  By the stability of the Wulff  inequality \eqref{FigMP} 
and recalling that $P_K(\lambda K)=n\lambda^{n-1}|K|$ for every $\lambda>0$, we have
\begin{equation*}
\begin{split}
f(\rho) &= \left[P_K(E+ \rho K) - P_K((1+ \hat \rho)K) \right]+  \left[P_K((1+ \hat \rho)K) - P_K((1+ \rho)K) \right]\\
&\geq c_n \Bigl(  \inf_{x \in \R^n } |((E+ \rho K) +x)\triangle (1+ \hat \rho)K|\Bigr)^2 
+  n \left((1+ \hat \rho)^{n-1} - (1+ \rho)^{n-1}\right).
\end{split}
\end{equation*}
We estimate the last term by
\begin{equation*}
(1+ \hat \rho)^{n-1} - (1+ \rho)^{n-1}
\geq\frac{(1+\hat{\rho})^n - (1+\rho)^n}{2+\rho+\hat{\rho}}
\geq \frac{1}{5}\left(|(1+ \hat \rho)K| - |(1+ \rho)K|\right).
\end{equation*}
Thus  it holds 
\begin{equation*}
\begin{split}
f(\rho) &\geq c_n \left(  \inf_{x \in \R^n } |((E+ \rho K) +x)\triangle (1+ \hat \rho)K|\right)^2  +  \frac{n}{5}\left(|(1+ \hat \rho)K| - |(1+ \rho)K|\right) \\
&\geq c_n\left(  \inf_{x \in \R^n } |((E+ \rho K) +x)\setminus (1+ \hat \rho)K|\right)^2 + c_n \left(|(1+ \hat \rho)K| - |(1+ \rho)K|\right)^2 \\
&\geq c_n \left(  \inf_{x \in \R^n }|((E+ \rho K) +x)\setminus (1+ \rho)K|\right)^2,
\end{split}
\end{equation*}
where  the last inequality is a simple consequence of 
\begin{equation*}
|((E+ \rho K) +x)\setminus (1+ \rho)K | \leq |((E+ \rho K) +x) \setminus  (1+ \hat \rho)K| 
+ | (1+ \hat \rho)K \setminus (1+ \rho)K|.
\end{equation*}
Now we use Lemma \ref{lemma eucl 1} and deduce that for every $\rho \in (0,r)$ it holds  
\begin{equation*}
 \inf_{x \in \R^n } |(E+ \rho K) +x)\setminus (1+ \rho)K | \geq \frac{\alpha(E)}{2}.
\end{equation*} 
Hence we conclude that 
\begin{equation*}
f(\rho) \geq c_n \,    \alpha^2(E)
\end{equation*}
for every $\rho \in (0,r)$ and the result follows by Lemma  \ref{lemma eucl 0}. 

Let us  assume $r >1$. By the previous argument we have 
\begin{equation}\label{ascendent step}
|E+ K| - |2K| \geq c_n\, \alpha^2(E). 
\end{equation}
By choosing $c_n\leq1/4$ we may assume that  $c_n\,\alpha^2(E)\leq1$. Thus we have the elementary inequality $2^n+c_n\, \alpha^2(E)\geq (2+c_n\, \alpha^2(E)/(n3^n))^n$.
Therefore, by \eqref{ascendent step} we have $|E+ K|  \geq |(2+c_n\,\alpha^2(E))K|$ for a  possibly smaller $c_n$. 
Using this estimate and the concentration inequality \eqref{aniso concentration}   we get
\begin{equation*}
 |(E+ K) + (r-1)K| \geq |(1+r+c_n\,\alpha^2(E))K|.
\end{equation*}
Thus we conclude that 
\begin{equation*}
|E+ rK| - |(1+r)K| \geq |(1+r+c_n\,\alpha^2(E))K| - |(1+r)K| \geq c_n (1+r)^{n-1} \alpha^2(E).
\end{equation*}
\end{proof}

\vspace{-5pt}
\begin{remark}\label{dimensional dependence}
The constant $c_n$ in the isoperimetric estimate \eqref{FigMP} decays at most like $n^{-13}$ as $n\to\infty$. 
Instead, a careful study of the proof of Theorem \ref{main thm 2} shows that the value 
for the constant $c_n$ in \eqref{conce convex} decays at most like $9^{-n}$. 
Both these decays do not seem optimal. In particular, for \eqref{FigMP} we conjectured that the constant $c_n$ is 
in fact independent of the dimension.
\end{remark}

\begin{proof}[\textbf{Proof of Corollary \ref{corollary}}]
We may  assume that $|E+F|\leq 3^n \max\{|E|, |F|\}$, since otherwise \eqref{Brunn-Minkowski} is trivially true.
Let $s >0$ be such that  $|E|=|sF|$. The concentration inequality  \eqref{aniso concentration} implies $|E+F|\geq|(1+s)F|$. 
Therefore,
\begin{equation*}\begin{split}
|E+F|^{1/n}-|E|^{1/n} - |F|^{1/n}
&=|E+F|^{1/n}-|(1+s)F|^{1/n}\\
&\geq\frac{1}{n|E+F|^{(n-1)/n}} [|E+F|-|(1+s)F|]\\
&\geq c_n \left(\frac{1} {\max\{|E|, |F|\}} \right)^{(n-1)/n} [|E+F|-|(1+s)F|].
\end{split}\end{equation*}
Assume first $|E|\geq|F|$. By  \eqref{conce convex} with $K=sF$ and $r=1/s$ we get,
\begin{equation*}\begin{split}
|E+F|^{1/n}-|E|^{1/n} - |F|^{1/n}
&\geq \frac{c_n}{s |E|^{(n-1)/n}}\,\frac{\alpha^2(E,F)}{|E|}\\
&= c_n\,|F|^{1/n}\,\frac{\alpha^2(E,F)}{|E|^2}.
\end{split}\end{equation*}
On the other hand when  $|F|\geq|E|$, the same argument yields
\begin{equation*}\begin{split}
|E+F|^{1/n}-|E|^{1/n} - |F|^{1/n}
&\geq \frac{c_n}{s^{n-1} |F|^{(n-1)/n}}\,\frac{\alpha^2(E,F)}{|E|}\\
&=c_n\,|E|^{1/n}\,\frac{\alpha^2(E,F)}{|E|^2}.
\end{split}\end{equation*}
\end{proof}

%%%%%%%%%%%%%%%%%%%%%%%%%%%%%%%%%%%%%%%%%%%%%%%%%%%%%%%%%%%%%%%%%%%%%%%%%%%%%%%%%%%%%%%%%%%%%%%%%%%%%%%
\begin{thebibliography}{15}

\bibitem{BBJ}
M. Barchiesi, A. Brancolini \& V. Julin.
Sharp dimension free quantitative estimates for the Gaussian isoperimetric inequality.
{\em Ann. Probab.}, 45, 668--697 (2017).

\bibitem{CK}
E. A. Carlen \& C. Kerce.
On the cases of equality in Bobkov's inequality and Gaussian rearrangement.
{\em Calc. Var. Partial Differential Equations} 13, 1--18 (2001).

\bibitem{CM16}
E. Carlen \& F. Maggi.
Stability for the Brunn-Minkowski and Riesz rearrangement inequalities, 
with applications to Gaussian concentration and finite range non-local isoperimetry.
To appear in {\em Canadian J. Math.}, DOI: 10.4153/CJM-2016-026-9

\bibitem{CFMP}
A. Cianchi, N. Fusco, F. Maggi \& A. Pratelli.
On the isoperimetric deficit in Gauss space.
{\em Amer. J. Math.} 133, 131--186 (2011).

\bibitem{Ch}
M. Christ. 
Near equality in the Brunn-Minkowski inequality.
Preprint (2012), http://arxiv.org/abs/1207.5062

\bibitem{EL}
R. Eldan.
A two-sided estimate for the Gaussian noise stability deficit.
{\em Invent. Math.} 201, 561--624 (2015).

\bibitem{EK}
R. Eldan \& B. Klartag.
Dimensionality and the stability of the Brunn-Minkowski inequality. 
{\em Ann. Sc. Norm. Super. Pisa Cl. Sci.  (5)},  13, 975--1007 (2014).

\bibitem{FJ}
A. Figalli \& D. Jerison. 
Quantitative stability of the Brunn-Minkowski inequality.
To appear in {\em Adv. Math.}.

\bibitem{FJ2}
A. Figalli \& D. Jerison. 
Quantitative stability for sumsets in $\R^n$.
{\em J. Eur. Math. Soc.}, 17, 1079--1106 (2015).

\bibitem{FMM}
A. Figalli, F. Maggi \& C. Mooney.
The sharp quantitative Euclidean concentration inequality.
Preprint (2016), https://arxiv.org/abs/1601.04100

\bibitem{FigMP}
A. Figalli, F. Maggi \& A. Pratelli.
A mass transportation approach to quantitative isoperimetric inequalities.
{\em Invent. Math.} 182, 167--211 (2010).

\bibitem{FigMP09}
A. Figalli, F. Maggi \& A. Pratelli.
A refined Brunn-Minkowski inequality for convex sets. 
{\em Ann. Inst. H. Poincar\'{e} Anal. Non Lin\'{e}aire}, 26, 2511--2519 (2009).

\bibitem{G}
R. J. Gardner.
The Brunn-Minkowski inequality.
{\em Bull. Amer. Math. Soc. (N.S.)}, 39, 355--405 (2002).

\bibitem{Le}
M. Ledoux. 
{\em Isoperimetry and Gaussian analysis}. 
In {\em Lectures on probability theory and statistics}, 
volume 1648 of Lecture Notes in Math., pages 165--294.
Springer, Berlin, (2006).

\bibitem{Le2}
M. Ledoux.
{\em Concentration of measure and logarithmic Sobolev inequalities}.
In {\em S\'{e}minaire de Probabilit\'{e}s XXXIII},
volume 1709 of Lecture Notes in Math., pages 120--216.
Springer, Berlin, (2006).

\bibitem{Ma}
F. Maggi.
{\em Sets of finite perimeter and geometric variational problems. An introduction to geometric measure theory}.
Cambridge Studies in Advanced Mathematics, 135. Cambridge University Press, Cambridge (2012).

\bibitem{MN}
E. Mossel \& J. Neeman.
Robust Dimension Free Isoperimetry in Gaussian Space.
{\em Ann. Probab.} 43, 971--991 (2015).

\bibitem{MN2}
E. Mossel \& J. Neeman.
Robust optimality of Gaussian noise stability.
{\em J. Eur. Math. Soc.} 17, 433--482 (2015).

\end {thebibliography}

\end{document}